\let\c@equation=\c@subsubsection
\definecolor{bluegray}{rgb}{0.4, 0.6, 0.8}
\title{{Commutative classifying space for simplicial groups}}
\date{\today}
\author{C\.{i}han Okay}
\email{cihan.okay@bilkent.edu.tr}
\address{Department of Mathematics, Bilkent University, 06800 Ankara}
\author{P\'al Zs\'amboki}
\email{zsamboki@renyi.hu}
\address{HUN-REN Alfr\'ed R\'enyi Institute of Mathematics, Re\'altanoda street 13-15, H-1053, Budapest, Hungary}
\theoremstyle{plain}
\newenvironment{customthm}[1]
  {\innercustomthm}
  {\endinnercustomthm}
\newenvironment{customcor}[1]
  {\innercustomcor}
  {\endinnercustomcor}
\definecolor{revC}{rgb}{0.0, 0.0, 1.0}
\newcommand\revC[1]{{\color{black}#1}}
\newcommand\revtwo[1]{{\color{black}#1}} %
\newcommand\revfour[1]{{\color{black}#1}} %
\newcommand\revfive[1]{{\color{black}#1}} %
\begin{document}

\begin{abstract} 
In this paper, we introduce a simplicial analog of classifying spaces for commutativity which classify principal bundles with commutativity structure on their transition functions.
Our construction $\Wbar(\tau,K)$, which takes as input a simplicial group $K$ and a cosimplicial group $\tau$ that encodes the additional structure such as commutativity, is a variation of the $\Wbar$-construction for simplicial groups. 
Our main result shows that the geometric realization of our $\Wbar(\tau,K)$ is homotopy equivalent to the topological classifying space $B(\tau,|K|)$.
\end{abstract} 

  \maketitle  
  
  \tableofcontents
  
\section{Introduction}

A classical result in algebraic topology is that principal $G$-bundles for a topological group $G$ are classified by the classifying space $BG$.
Principal $G$-bundles with a commutativity structure on their transition functions are introduced in \cite{AG15}. The classifying space $B(\ZZ,G)$ for such bundles, first introduced in \cite{ACT12}, is a variant of the ordinary classifying space $BG$, {that} is constructed from pairwise commuting group elements. 
This construction is a particular case of a class of constructions, denoted by $B(\tau,G)$, that depend on a cosimplicial group $\tau^\dt$.

\revfive{Our construction of the various classifying space models will be approached from a cosimplicial point of view.}
\revfive{Let $F^\bullet:\catDelta\to\catGrp$ denote the cosimplicial group defined as follows \cite[Definition 2.9]{Vil17}:
  \begin{enumerate}
    \item On the level of objects, it sends a nonnegative integer $n$ to the free group $F^n$ on generators $\{e_1,\dotsc,e_n\}$.
\item On the level of morphisms, we have, for $1\le j\le n$:
\begin{alignat*}{3}
d^0 e_j \; &=\; e_{j+1},
\qquad\qquad &
s^0 e_j \; &=\;
\begin{cases}
1 & j=1,\\[2pt]
e_{j-1} & j>1;
\end{cases}
\\[8pt]
d^i e_j \; &=\;
\begin{cases}
e_j & j<i,\\[2pt]
e_j e_{j+1} & j=i,\\[2pt]
e_{j+1} & j>i,
\end{cases}
\quad
\qquad\qquad &
s^i e_j \; &=\;
\begin{cases}
e_j & j\le i,\\[2pt]
1 & j=i+1,\\[2pt]
e_{j-1} & j>i+1,
\end{cases}
\qquad (0<i<n)
\\[8pt]
d^n e_j \; &=\; e_j,
\qquad\qquad &
s^n e_j \; &=\;
\begin{cases}
e_j & j\le n,\\[2pt]
1 & j=n+1.
\end{cases}
\end{alignat*}
  \end{enumerate}}
  Note that for any (non-simplicial) group $G$, we get a canonical isomorphism of simplicial sets $\Hom_{\catGrp}(F^\bullet, G)\cong\Wbar G$.

\noindent
\revfive{In constructing the variants of classifying spaces, we will work with quotients $\tau^\bullet$ of the cosimplicial group $F^\bullet$.} 
To be able to have better control on the resulting objects \revfour{$B(\tau,G)$}, {we assume the existence of a map} $\eta^\bullet:F^\bullet\to\tau^\bullet$ {of cosimplicial groups that is surjective in each degree}.
Then, the precomposition maps $\circ\eta^n:\Hom(\tau^n,G)\to\Hom(F^n,G)\cong G^{\times n}$ are injective. Thus, we can equip the set $\Hom(\tau^n,G)$ of group homomorphisms with the subspace topology, getting us the simplicial topological space $B(\tau,G)_\bullet=\Hom(\tau^\bullet,G)$. The space $B(\tau,G)$ is the geometric realization $|B(\tau,G)_\bullet|$.
The main examples are $BG$, the usual classifying space, corresponding to $F^\dt$  and $B(\ZZ,G)$ corresponding to  
$\ZZ^\dt${, the cosimplicial group that sends $[n]$ to the abelianization $\ZZ^n$ of $F^n$}.
\revfive{Note that this means that the topological space $B(\tau, G)$ depends on the map $\eta^\bullet$. However, this is not signaled in notation in prior work \cite{ACT12}; we shall follow this convention.}

We carry over this construction to the simplicial category. \revfour{Let $K$ be a simplicial group.}
The simplicial set $\Wbar K$ is isomorphic to the total simplicial set $TNK$ of the nerve of $K$
{\cite{Ste12}*{Lemma 15}}. 
The nerve $NK$ is the \revfour{horizontally reduced} bisimplicial set 
{$(NK)_{p,q}=N(K_q)_p\cong\Hom(F^p,K_q)$.}
{Our $\tau$ version can be obtained by modifying} 
this bisimplicial set {by setting} 
{$N(\tau,K)_{p,q}=\Hom(\tau^p,K_q)$.}
Then  
{the $\tau$ version of the bar construction is defined by}
$\Wbar(\tau,K)=TN(\tau,K)$.

\revfour{We also generalize the important property that t}he original bar construction \revfour{$\Wbar\colon\catsGrp\to\catsSet$} has a left adjoint \revfour{$G\colon\catsSet\to\catsGrp$}, Kan's loop group functor, see e.g.~\cite{GJ99}*{Lemma 5.3}.
It is shown in \cite{Ste12}*{Proposition 16} that the \revfour{adjunction $\Adj{G}{\catsSet}{\catsGrp}{\Wbar}$ factors as the composite of adjunctions}
$$
\adjcomp{\catsSet}{\catssSet}{\catsGrp}{\Dec}{T}{\pi_1}{N}
$$
\revfour{where $\Dec$ is the d\'ecalage functor and the}
\revfive{functor $\pi_1:\catssSet\to\catsGrp$ is induced naturally by the left Kan extension $\pi_1:\catsSet\to\catGrp$ of the free cosimplicial group $F^\bullet:\Delta\to\catGrp$ along the the natural inclusion 
$\Delta^\bullet \colon \Delta \longrightarrow \catsSet$ that sends $[n]$ to $\Delta[n]$.

For a reduced simplicial set $X$, the group $\pi_1X$ is the usual simplicial fundamental group on the unique vertex of $X$; on a general simplicial set $X$, it is the fundamental group of the quotient $X/\sk_0X$, where the skeleton $\sk_0X$ is the smallest simplicial subset of $X$ that contains the vertices $X_0$.
}

\revfour{
{We let $\pi_1(\tau, -)$ denote the left Kan extension of the cosimplicial group $\tau^\bullet$ along $\Delta^\bullet$. It is by construction a left adjoint of the modified nerve functor $N(\tau,-)$.
Finally, this gives us the modified loop group functor $G(\tau,-)=\pi_1(\tau,-)\circ\Dec$, together with an adjunction $G(\tau,-)\adjoint\Wbar(\tau,-)$.}
{We give a self-contained introduction to d\'ecalage and the loop group functor in Section \ref{sec:loop-decalage}, and develop our modifications in Section \ref{sec:varWbar}.}
}

We compare our construction to the topological version.
 
\begin{customthm}{\ref{thm:B-Wbar}}
There is a natural homotopy equivalence
$$
B(\tau,|K|) \rightarrow |\Wbar(\tau,K)|.
$$ 
\end{customthm}

Thus our construction has the correct homotopy type. We also show that

\begin{customcor}{\ref{cor:simp-Lie}}
 For a compact Lie group $G$, there is a natural homotopy equivalence
$$
B(\tau,G) \to |\Wbar(\tau,SG)| 
$$
where $S$ denotes the singular functor, the right adjoint of the geometric realization.
\end{customcor}

Consequently, we can import the results regarding all the interesting examples studied previously, such as \cites{AG15,AGV17,AGLT17,O16,GH19,OW19}, to the simplicial category.

Our simplicial construction has the advantage of making available methods from simplicial homotopy theory that are not available in the original topological construction. 
{They are motivated by the flexibility of simplicial theory, which naturally accommodates various constructions, such as those in \cite{Vil17} that arise from different cosimplicial structures.}
{On the other hand,  
{using the fact that}
geometric realization is part of a Quillen equivalence and {applying}  Theorem \ref{thm:B-Wbar}, we 
{obtain the following}
bijections:
$$
[B,\Wbar(\tau,K)]\cong[|B|,|\Wbar(\tau,K)|]\cong[|B|,B(\tau,|K|)].
$$
Therefore principal $|K|$-bundles over $|B|$ with commutativity structure are classified by
\revfive{the homotopy classes $[B,\Wbar(\tau,K)]$.}
\revfive{Since $\Wbar(\tau,K)$ is not a Kan complex in general, these homotopy
classes are computed by
replacing $\Wbar(\tau,K)$ with a fibrant (Kan) replacement. It is not straighforward to describe representatives of these homotopy classes of maps or the kind of principal $|K|$-bundles they classify.}
The authors plan to pursue a simplicial study of principal bundles with commutativity structure in future work.

{Recently, simplicial versions of commutative classifying spaces have been useful in the theory of simplicial distributions \cites{okay2022simplicial,okay2024twisted}, a framework for constructing probability distributions on principal bundles to analyze distributions that arise in quantum theory. 
We expect that our generalizations will yield new and interesting examples of simplicial distributions.}
}

{The rest of the paper is organized as follows.
In Section \ref{sec:loop-decalage}, we study the total d\'ecalage  of the standard $n$-simplex. Results in this section are used to define the modified $\Wbar$-construction.
In Section \ref{sec:varWbar} we introduce $\Wbar(\tau,K)$ and show that after looping once the canonical map to $\Wbar(K)$ splits up to homotopy (Proposition \ref{pro:splitting}). We compare our simplicial construction to the original topological construction in Section \ref{sec:comptopcat}. Our main result Theorem \ref{thm:B-Wbar} is proved in this section. 
}

{\bf Acknowledgements:}
We would like to thank the editor, Haynes Miller, for very helpful questions and comments, the anonymous referee for valuable comments, Danny Stevenson for sharing a proof of Corollary \ref{cor:G=pi1Dec}, and Ben Williams for feedback on an earlier version of this paper.
The first author is  supported by the Air Force Office of Scientific
Research under award number FA9550-21-1-0002; and would like to thank Alfr\'ed R\'enyi Institute of Mathematics for their hospitality during a visit in the summer of 2019. The second author is partially supported by the project NKFIH K 138828.

\section{ Kan's loop group and d\'{e}calage} \label{sec:loop-decalage}

{
In this section our goal is to give an alternative description of Kan's loop group functor.}
The standard $n$-simplices $\Delta[n]$ as $n$ varies can be assembled into a cosimplicial simplicial set $\Delta^\dt$. Applying Kan's loop group functor $G$ level-wise gives a cosimplicial simplicial group. In this section we describe this object using the factorization   $G=\pi_1 \Dec$.
Proposition \ref{pro:Loop-pi1Dec-iso} gives an explicit morphism $G\Delta^\dt \rightarrow \pi_1\Dec\, \Delta^\dt$ of cosimplicial simplicial groups.
This description will be essential later on when we introduce variations of the $\Wbar$-construction.  For the properties of the loop group functor and the d\'{e}calage functor we refer to \cite{Ste12}.

For a category $\catC$ we write $\mathbf{s}\catC$ for the category of simplicial objects in that category.

\subsection{D\'{e}calage }

Let $\catDelta$ denote the simplex category.
Let $\catDelta_+$ denote the category obtained from $\catDelta$ by adjoining the empty ordinal denoted by $[-1]$.
The category $\catDelta_+$ is a monoidal category with unit $[-1]$. The monoidal structure is given by the bifunctor
$
+:\catDelta_+\times \catDelta_+\rightarrow \catDelta_+
$
which sends a pair $([m],[n])$ to the ordinal $[m+n+1]$, and  a pair of morphisms $\varphi:[m]\rightarrow [m']$ and $\theta:[n]\rightarrow [n']$ to the morphism
$$
(\varphi+\theta)(i)=\left\lbrace
\begin{array}{ll}
\varphi(i) & 0\leq i\leq m \\
\theta(i-m-1)+m'+1 & m+1\leq i \leq m+n+1
\end{array}
\right.
$$

Let $\catsSet$ ($\catsSet_+$) denote the category of (augmented) simplicial sets. Given a simplicial set $X$  the augmented simplicial set $\Dec_0 X$ is defined by pre-composing $X$ with the functor $\catDelta_+ \rightarrow \catDelta$ defined  by $[n]\mapsto [n]+[0]$.
\revfour{Here $[n]+[0]$ refers to the monoidal product in $\catDelta_+$,}
\revfour{which we defined above explicitly,}
\revfour{viewed as an object of $\catDelta$,}
\revfour{by restriction to the poset of nonempty ordinals.}
 Note that $(\Dec_0 X)_n=X_{n+1}$ and the simplical structure maps $d_i:(\Dec_0X)_n\to (\Dec_0X)_{n-1}$ and $s_j:(\Dec_0X)_n \to (\Dec_0X)_{n+1}$ are given by
$$
d_i:X_{n+1}\to X_n,\;\;\;\;  s_j:X_{n+1}\to X_{n+2}
$$
where $0\leq i,j\leq n$.
$\Dec_0 X$ is an augmented simplicial set where the augmentation map $d_0:\Dec_0X\rightarrow X_0$  is induced in degree $n$ by  the $(n+1)$-fold composition \revC{$(d^0)^{n+1}=d^0\circ\cdots\circ d^0:[0]\rightarrow [n]+[0]$}.

Let us describe $\Dec_0(\Delta[k])$ in more detail.

\begin{lem}\label{lem:Dec0Delta}
There is an  isomorphism of augmented simplicial sets
\begin{equation}\label{Dec0}
\begin{tikzcd}
\coprod_{0\leq l \leq k} \Delta[l] \arrow{r}{\cong} \arrow{d} &  \Dec_0(\Delta[k]) \arrow{d}{d_0}\\
\set{0,1,\cdots,k} \arrow[r,equal] & \Delta[k]_0
\end{tikzcd}
\end{equation}
which is natural with respect to morphisms in $\catDelta$ and maps $\Delta[l]$ isomorphically onto the pre-image $(d_0)^{-1}(l)$.
\end{lem}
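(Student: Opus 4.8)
The plan is to build the top isomorphism by hand in each simplicial degree, check it is simplicial, and then verify naturality. First I would unwind both sides at level $n$. By definition $(\Dec_0\Delta[k])_n=\Delta[k]_{n+1}=\Hom([n+1],[k])$, and the stated description of the augmentation (precomposition with $(d^0)^{n+1}:[0]\to[n+1]$, which sends $0\mapsto n+1$) shows that $d_0$ takes an order-preserving map $\sigma:[n+1]\to[k]$ to its value $\sigma(n+1)$ at the last vertex. On the other side $\big(\coprod_{0\le l\le k}\Delta[l]\big)_n=\coprod_{0\le l\le k}\Hom([n],[l])$, and the left-hand structure map sends the $l$-th summand to $l\in\{0,\dots,k\}=\Delta[k]_0$.

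Next I would define the comparison map $\Phi$ at level $n$ on the $l$-th summand by sending $\tau:[n]\to[l]$ to the order-preserving map $\sigma:[n+1]\to[k]$ with $\sigma(j)=\iota_l\tau(j)$ for $0\le j\le n$ and $\sigma(n+1)=l$, where $\iota_l:[l]\hookrightarrow[k]$ is the inclusion of the initial segment; this is monotone since $\tau(n)\le l$. Its inverse is immediate: if $\sigma(n+1)=l$ then monotonicity forces $\sigma(j)\le l$ for all $j$, so $\sigma$ corestricts to a monotone map $[n]\to[l]$, and the two assignments are mutually inverse. Since $d_0\Phi(\tau)=\sigma(n+1)=l$, the square commutes and $\Phi$ carries the $l$-th summand bijectively onto the fibre $(d_0)^{-1}(l)$. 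As $\Delta[k]_0=\{0,\dots,k\}$ is discrete, these fibres are sub-simplicial sets partitioning $\Dec_0\Delta[k]$, so once $\Phi$ is shown to be simplicial it is automatically an isomorphism of augmented simplicial sets mapping each $\Delta[l]$ isomorphically onto $(d_0)^{-1}(l)$.

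The one computation that genuinely needs doing, and which I expect to be the main (but routine) obstacle, is simpliciality of $\Phi$. The point is that the simplicial operators of $\Dec_0\Delta[k]$ are the \emph{shifted} ones: at level $n$ only $d_0,\dots,d_n$ and $s_0,\dots,s_n$ occur, i.e.\ $\sigma$ is precomposed with cofaces $d^i:[n]\to[n+1]$ and codegeneracies $s^j:[n+2]\to[n+1]$ with $i,j\le n$. Every such operator carries the top element of its domain to the top element $n+1$ of $[n+1]$, so evaluating $\sigma$ there always returns $\sigma(n+1)=l$; hence the last-vertex value is preserved and the induced action on the corestriction $\tau=\sigma|_{[n]}$ is exactly the standard action of $d_i$ and $s_j$ on $\Delta[l]$. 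This is pure index bookkeeping.

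Finally I would address naturality in $[k]$. A morphism $\theta:[k]\to[k']$ in $\catDelta$ acts on $\Dec_0\Delta[k]$ by postcomposition $\sigma\mapsto\theta\sigma$, which sends the fibre over $l$ to the fibre over $\theta(l)$; correspondingly I endow $[k]\mapsto\coprod_{0\le l\le k}\Delta[l]$ with the functoriality sending the $l$-summand into the $\theta(l)$-summand via $\Delta[\theta_l]$, where $\theta_l:[l]\to[\theta(l)]$ is the corestriction of $\theta$. With these structure maps the naturality square is a direct check: following $\tau:[n]\to[l]$ either way produces the map $[n+1]\to[k']$ that agrees with $\theta\iota_l\tau$ on $[n]$ and takes the value $\theta(l)$ at $n+1$.
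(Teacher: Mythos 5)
Your proof is correct and takes essentially the same approach as the paper: both decompose $\Dec_0(\Delta[k])$ into the fibres of the augmentation (evaluation at the last vertex) and identify the fibre over $l$ with $\Delta[l]$ via the mutually inverse restriction/extension correspondence between monotone maps $[n+1]\to[k]$ sending $n+1\mapsto l$ and monotone maps $[n]\to[l]$. You merely spell out the simpliciality (via preservation of the top element by the shifted operators) and naturality checks that the paper leaves to the citation of \cite{Ste12} and routine verification.
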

\Proof{
$\Dec_0(\Delta[k])$ is the coproduct  of $(d_0)^{-1}(l)$ over $l$ since the augmentation map $d_0$ is a deformation retraction \cite{Ste12}.
An $m$-simplex of $(d_0)^{-1}(l)$ is a functor $\varphi:[m+1]\rightarrow [k]$ such that $\varphi d_0^{m+1}(0)=l$, and such functors are in one-to-one correspondence with functors $[m]\rightarrow [l]$.
}

\subsection{Total d\'{e}calage}
The d\'{e}calage functor  $\Dec_0$ is a comonad whose structure maps are described as follows: $\Dec_0\rightarrow(\Dec_0)^2  $
is induced by $[n]+[0]+[0]\rightarrow [n]+[0]$ given by the sum of the identity map on $[n]$ and $s^0:[0]+[0]\rightarrow [0]$, and the other structure map $\Dec_0 \rightarrow \idy$ is induced by  \revC{$(d^0)^n:[n]\rightarrow [n]+[0]$}. The total d\'{e}calage  of the simplicial set $X$ is defined to be the \revC{bisimplicial set $\Dec X$ obtained from the} comonadic resolution \revC{of $X$, which in degree $n$, is given by the simplicial set}
$$
\Dec_n X =(\Dec_0)^{n+1} X.
$$
If we think of  $\Dec X$   as a vertical bisimplicial set with horizontal simplicial sets then the set of $(m,n)$-simplices is $X_{m+n+1}$. The horizontal face and degeneracy maps are given by $d_i^h=d_i:(\Dec_nX)_m\rightarrow (\Dec_nX)_{m-1}$ and $s_i^h=s_i:(\Dec_nX)_m\rightarrow (\Dec_nX)_{m+1}$ where $0\leq i\leq m$, the vertical face and degeneracy maps are given by  $\revC{d_j^v}=d_{m+1+j}:(\Dec_nX)_m\rightarrow (\Dec_{n-1}X)_{m}$ and $\revC{s_j^v}=s_{m+1+j}:(\Dec_nX)_m\rightarrow (\Dec_{n+1}X)_{m}$ where $0\leq j\leq n$.

We will give a description of $\Dec\,\Delta[k]$. For this we introduce a bisimplicial set $D[k]$ defined by
$$
D_n[k] =  \coprod_{\varphi:[n]\to [k]} \Delta[\varphi(0)]
$$
together with the simplicial structure
$$
d_i(\varphi,\theta) = \left\lbrace \begin{array}{cc}
(\varphi d^0, \iota \theta) & i=0  \\
(\varphi d^i , \theta) & 0<i \leq n

\end{array} \right.
$$
where $\iota:\Delta[\varphi(0)]\to \Delta[\varphi(1)]$ is induced by the inclusion $[\varphi(0)]\subset [\varphi(1)]$ and
$$
s_j (\varphi,\theta)  =(\varphi s^j,\theta)
$$
for all $0\leq j\leq n$.

\Pro{\label{pro:DecDelta}
There is an isomorphism of bisimplicial sets
$$
g: D[k]\to \Dec\,\Delta[k].
$$
}
\begin{proof}
Applying   Lemma \ref{lem:Dec0Delta}   gives an isomorphism
\begin{equation}\label{dec_Del}
g_n:\coprod_{\varphi:[n]\rightarrow [k]} \Delta[\varphi(0)] \rightarrow \Dec_n\Delta[k]
\end{equation}
which  in degree $m$ sends $(\varphi,\theta)$ to the functor $[m+n+1]\rightarrow [k]$ defined using $\theta$ on the subset $[m]\subset [m+n+1]$ and on the rest  using $\varphi$. More precisely it is the unique functor which fills the diagram
$$
\begin{tikzcd}
{[m]}\arrow[d,hook] \arrow{rd}{\theta} & \\
{[m+n+1]} \arrow[r,dashed] & {[k]} \\
{[n]} \arrow{u}{(d^0)^{m+1}} \arrow[ru,"\varphi"']
\end{tikzcd}
$$
On the other hand the inverse to this map is given by
\begin{equation}\label{dec_Del_inv}
 g_n^{-1}: \Dec_n\Delta[k] \rightarrow \coprod_{\varphi:[n]\rightarrow [k]} \Delta[\varphi(0)]
\end{equation}
which in degree $m$ sends a functor $\alpha: [m+1+n]\rightarrow [k]$ to the pair of the functors given by $\alpha (d^0)^{m+1}:[n]\rightarrow [k]$ and $\alpha':[m]\rightarrow [\alpha(m+1)]$ that fits into the diagram
$$
\begin{tikzcd}
{[m+1+n]} \arrow{r}{\alpha} & {[k]}\\
{[m]} \arrow[hook,u] \arrow{r}{\alpha'} & {[\alpha(m+1)]} \arrow[hook,u]
\end{tikzcd}
$$
It remains to check that $\set{g_n}_{n\geq 0}$ gives a morphism of simplicial sets. It is straightforward to check this for $s_j$ and $d_i$ with $i>0$. When $i=0$ the face map $d_0$ changes $\varphi(0)$ to $\varphi(1)$, and this is accounted for by adding the inclusion $\iota$.

\end{proof}

\subsection{Nerve functor and its adjoint}
We can take the nerve of a group to obtain a simplicial set. Let $\catGrp$ denote the category of groups. This construction gives a functor $N:\catGrp \rightarrow \catsSet$. In fact, the image of the functor lives in the category $\catsSet_0$ of reduced simplicial sets {(i.e., those simplicial sets whose set of  $0$-simplices is a singleton)}. Let us define  a functor  $\pi_1:\catsSet \rightarrow \catGrp$ by assigning \revC{to} a simplicial set $X$ the quotient of the free group $F(X_1)$  on the set of $1$-simplices by the relations $[s_0x]=1$ for all $x\in X_0$, and $ [d_2\sigma][d_0\sigma]=[d_1\sigma]$  for all $\sigma\in X_2$. The functor $\pi_1$ is the left adjoint of $N$. \revfour{Note that for reduced simplicial sets our functor $\pi_1$ agrees with the usual fundamental group. In general,}
\revfive{we first take the quotient by the skeleton $\sk_0$}
\revfour{and then take the usual fundamental group.}

Let us compute $\pi_1$ of a $k$-simplex. Note that the set of $1$-simplices is given by functors $[1]\rightarrow [k]$. The relation coming from $s_0$ will kill those which are not injective. Each $2$-simplex will introduce a  relation among the edges in its boundary. As a result there is an isomorphism of groups
$$
F^k \rightarrow \pi_1 \Delta[k]
$$
which sends a generator $e_i$ of the free group $F^k$ to the $1$-simplex $[1]\rightarrow [k]$ specified by the image $\set{i-1,i}$. We will  identify these groups and given an ordinal map $\alpha:[k]\rightarrow [l]$ we will write
$$
\alpha_*: F^{k} \rightarrow F^l
$$
for the induced map $\pi_1\Delta[k]\rightarrow \pi_1\Delta[l]$.
Considering all the simplices at once defines a cosimplicial group $F^\dt:\catDelta \rightarrow \catGrp$ where $F^\dt[k]$ is the free group $F^k=\pi_1\Delta[k]$. At this point we also observe that the nerve functor is represented by the cosimplicial group $F^\dt$ in the sense that
$$(NG)_\dt=\catGrp(F^\dt,G).$$
 This approach allows us to describe the adjunction $\pi_1\adjoint N$ from the general theory of Kan extensions.

\subsection{Loop group}
 Given a (reduced) simplicial set $X$ the loop group $GX$ is a simplicial group which homotopically behaves as the loop space. The group of $n$-simplices is the free group {$F(X_{n+1})/F(s_0X_n)$}.
The face maps   are defined by
$$
d_i[x] =\left\lbrace \begin{array}{ll}
{[d_1x][d_0x]^{-1}} & i=0\\
{[d_{i+1}x]} & 0< i\leq n
\end{array}\right.
$$
and the degeneracy maps are defined by $s_j[x]=[s_{j+1}x]$ for all $0\leq j\leq n$. In fact this construction is related to the d\'{e}calage construction.

Recall that the bisimplicial set $\Dec\, \Delta[k]$ has a nice description whose  simplicial structure is given in Proposition \ref{pro:DecDelta}. Let us consider the simplicial group $ \pi_1\Dec \Delta[k] $ obtained by applying $\pi_1$ to each vertical degree. From the isomorphism \ref{dec_Del} we see that the resulting simplicial group consists of the free groups
\begin{equation}\label{eq:pi1Dec}
(\pi_1\Dec \Delta[k])_n \cong (\pi_1D[k])_n = \coprod_{\varphi:[n]\rightarrow [k]} F^{\varphi(0)}
\end{equation}
whose simplicial structure maps are induced by the ones of $D[k]$. Let us describe this structure explicitly. We write $[\varphi,e_j]$ to denote the generator of $F^{\varphi(0)}$ corresponding to the $\varphi$ term in the coproduct. The simplicial structure maps are given by
\begin{equation}\label{eq:pi1Dec-simplicial}
d_i[\varphi,e_j] =\left\lbrace \begin{array}{ll}
{[\varphi d^0, \iota_*(e_j)]} & i=0 \\
{[\varphi d^i, e_j]} & 0<i\leq n
\end{array} \right.
\end{equation}
and $s_i[\varphi,e_j]=[\varphi s^i,e_j]$ for the degeneracy maps. Recall that $\iota_*$ is the map $\pi_1 \Delta[\varphi(0)]\to \pi_1 \Delta[\varphi(1)]$ induced by the inclusion $[\varphi(0)]\subset [\varphi(1)]$.

\subsection{Relation to d\'{e}calage}
 Let $\Delta^\dt:\catDelta \rightarrow \catsSet$ denote the functor defined by sending $[k]$ to the $k$-simplex $\Delta[k]=\catDelta(-,[k])$. We can think of $\Delta^\dt$ as  a cosimplicial object in the category of simplicial sets.
Applying the functor $\Dec$ to the cosimplicial object $\Delta^\dt$ in each degree and taking $\pi_1$ of the resulting simplicial set gives a cosimplicial object
$$
\pi_1\Dec\,\Delta^\dt:\catDelta \to \catsGrp.
$$
The object $[n]$ is sent to the \revC{simplicial} group $\pi_1\Dec\,\Delta[n]$. Another cosimplicial object can be obtained by composing with Kan's loop group functor
$$
G\Delta^\dt:\catDelta \to \catsGrp.
$$

\Pro{\label{pro:Loop-pi1Dec-iso} There is a natural isomorphism of  functors
$$
\epsilon^\dt:G\Delta^\dt \rightarrow \pi_1\Dec\, \Delta^\dt.
$$ }
\begin{proof}
\revC{Consider the cosimplicial degree $k$ and simplicial degree $n$. We begin by defining a group homomorphism
$
(\epsilon^k)_n: G(\Delta[k])_n \to (\pi_1 \Dec \Delta[k])_n.
$
For this recall that $G(\Delta[k])_n$ is a quotient of the free group on the set $\Delta[k]_{n+1}$. By \ref{eq:pi1Dec} $(\pi_1 \Dec \Delta[k])_n$ is the free product of $ \pi_1(\Delta[\varphi(0)])$ over the ordinal maps $\varphi:[n]\rightarrow [k]$, hence a free group on the disjoint union $\coprod_\varphi \Delta[\varphi(0)]_1$. We define a map between the set of generators of the free groups}
$$
\Delta[k]_{n+1} \rightarrow \coprod_{\varphi:[n]\rightarrow [k]} \Delta[\varphi(0)]_1
$$
by sending $\beta:[n+1]\rightarrow [k]$ to the pair given by $\beta d^0:[n]\rightarrow [k]$ and $\beta|_{[1]}$, the restriction of $\beta$ along the natural inclusion $[1]\subset [n+1]$.
\revC{This map induces a group hommorphism}
$$
(\epsilon^k)_n: G (\Delta[k])_n \longrightarrow \revC{(\pi_1\Dec \Delta[k])_n}
$$
since a generator $[\alpha s^0]$ \revC{for some $\alpha:[n+1]\to [k]$} in the image of the degeneracy map $s_0$ is mapped to the generator given by the pair of the map $\alpha s^0 d^0=\alpha$ and the map obtained by restricting $\alpha s^0$ to $[1]$. The latter generator corresponds to $[(d^0)^{\alpha(0)}s^0]$ in the free group $F^{\alpha(0)}$ and is equivalent to the identity element under the identifications \revC{in $ \pi_1(\Delta[\varphi(0)])$}.
Moreover, $(\epsilon^k)_n$ is an isomorphism since both groups are free groups and the map is a bijection between the generators.
 For simplicity of \revC{notation} we will suppress the cosimplicial degree and write $\epsilon_n=(\epsilon^k)_n$.
Next we check that the map is compatible with the simplicial structure. For $0<i\leq n$ we have $\epsilon_nd_i(\alpha)=\epsilon_n(\alpha d^{i+1})=\alpha d^{i+1}d^0=\alpha d^0d^{i}$, similarly for degeneracy maps we have $\epsilon_ns_j(\alpha)= \alpha d^0s^{j}$ for all $0\leq j\leq n$. Thus for the face map we have
$$
\epsilon_n(d_i[\alpha])= [\alpha d^0 d^i , \alpha d^{i+1} |_{[1]}] = d_i[\alpha d^0 , \alpha|_{[1]}] = d_i\revC{\epsilon_{n+1}}[\alpha]
$$
and similarly $\epsilon_n$ commutes with the degeneracy maps.
Finally $d_0$ on the loop group is defined by $d_0[\alpha]=[\alpha d^1][\alpha d^0]^{-1}$.
Therefore we have
$$
\epsilon_n(d_0[\alpha])=\epsilon_n([\alpha d^1])\epsilon_n([\alpha d^0])^{-1}=[\alpha (d^0)^2,\alpha d^1 |_{[1]}]\;\revC{[\alpha (d^0)^2,\alpha d^0 |_{[1]}]^{-1}}
$$
where we used the simplicial identity  $d^1d^0=(d^0)^2$.
\revC{U}nder $\epsilon_n$ both $[\alpha d^1]$ and $[\alpha d^0]$ map to the same free group $F(\alpha(2))$ indexed by $\alpha (d^0)^2$.
Let $\sigma$ denote the restriction of $\alpha$ to $[2]$ (note that $\alpha$ lives in degree $\geq 2$). Then using the relation in the fundamental group  $\pi_1 \Delta[\alpha(2)]\cong F(\alpha(2)) $ we can write
$$
[\alpha d^1 |_{[1]}][\alpha d^0 |_{[1]}]^{-1} = [d_1\sigma][d_0\sigma]^{-1} = [d_2\sigma] = [\alpha|_{[1]}]
$$
which is the same, after adding the coproduct index, as $d_0$ of the generator \revC{$\epsilon_{n+1}[\alpha]$}.

 Thus   we showed that we have an isomorphism of simplicial groups
$$
\epsilon^k : G \Delta[k] \rightarrow \pi_1 \Dec \Delta[k]
$$
where we remembered the cosimplicial index, and suppressed the simplicial index. The resulting map is compatible with the cosimplicial structure since all the constructions involved are functorial in $\Delta[k]$.
\end{proof}

\Rem{{\rm \label{rem:inverse-iso}
One can check that the inverse of $\epsilon^k$ is induced by the map
\begin{equation}\label{map:epsilon-inv}
 \coprod_{\varphi:[n]\rightarrow [k]} \Delta[\varphi(0)]_1 \rightarrow \Delta[k]_{n+1}
\end{equation}
which sends $(\varphi,\gamma)$ to the functor $\alpha:[n+1]\rightarrow [k]$ defined by $\alpha(0)=\gamma(0)$ and $\alpha(i)=\varphi(i-1)$ for $0<i\leq n+1$. An argument similar to the proof of Proposition \ref{pro:Loop-pi1Dec-iso} can be used to see that after taking the appropriate quotients of the free groups the resulting map is an isomorphism.
We will denote this map simply as the inverse
 $$
(\epsilon^\dt)^{-1}:\pi_1\Dec \Delta^\dt \rightarrow G\Delta^\dt.
$$
}}

Almost immediately we obtain the following result which is also proved in \cite[Proposition 16]{Ste12}, however, without an explicit isomorphism.

\Cor{\label{cor:G=pi1Dec}For any simplicial set $X$ there is a natural isomorphism of simplicial groups
$$
G(X) \to \pi_1\Dec\,(X).
$$
}
\Proof{
An arbitrary simplicial set can be written as a colimit of $\Delta[n]$ over the simplex category. All the functors in sight $G$, $\pi_1$, \revC{and $\Dec$ (see \cite[\S 3]{Ste12})} are left adjoints, thus they preserve colimits. Then the result follows from Proposition \ref{pro:Loop-pi1Dec-iso}.
}

\Rem{{\rm In Proposition 16 of \cite{Ste12} Kan's loop group $G$ is compared to $\pi_1 R\, \Dec$ where $R$ is the left adjoint of the inclusion $\catssSet_0 \to \catssSet$. We avoided this functor by defining $\pi_1$ for an arbitrary simplicial set not necessarily a reduced one.
}}

\subsection{Simplicial group homomorphisms}
Let $K$ be a simplicial group. The set of simplicial group homomorphisms $G\Delta[k]\to K$ is well-known \cite{GJ99}.
This set is precisely the set   of $k$-simplices of the $\Wbar$-construction of $K$. For variations of this construction we need an explicit description of such simplicial group homomorphisms.

By Proposition \ref{pro:Loop-pi1Dec-iso} we can instead consider a morphism
 $f:\pi_1\Dec \Delta[k]\rightarrow K$   of simplicial groups. The degree-$n$ part $f_n$  belongs to the set
$$
\catGrp(\pi_1\Dec\Delta[k]_n,K_n) = \prod_{\varphi:[n]\to[k]} \catGrp(F^{\varphi(0)},K_n) = \prod_{\varphi:[n]\to[k]} K_n^{\times \varphi(0)}
$$
where $K_n^{\times \varphi(0)}$ is understood to be the trivial group if $\varphi(0)=0$. Therefore $f_n$ is determined by the tuple of elements $f_{n}[\varphi,e_j]$ in $K_n$ where  $e_j\in F^{\varphi(0)}$ and $1\leq j\leq \varphi(0)$ . The map $\varphi$ factors as follows
$$
\begin{tikzcd}
{[n]} \arrow{r}{\varphi} \arrow[rd,"\phi"'] & {[k]} \\
& {[k-\varphi(0)]} \arrow[hook,u,"(d^0)^{\varphi(0)}"']
\end{tikzcd}
$$
where $\phi(0)=0$ i.e. the canonical decomposition of $\phi$ does not involve $d^0$. The simplicial structure in \ref{eq:pi1Dec-simplicial}  implies that
$$
\phi^*[(d^0)^{\varphi(0)},e_j] = [(d^0)^{\varphi(0)}\phi,e_j] = [\varphi,e_j].
$$
and thus $f_n[\varphi,e_j]=\phi^*f_{k-\varphi(0)}[(d^0)^{\varphi(0)},e_j]$. As a result   the elements $f_{k-l}[(d^0)^l,e_j]$ where $e_j\in F^l$ for $l=1,\cdots,k$ completely determine $f_n$. It remains to consider the effect of $d^0$. As a consequence of \ref{eq:pi1Dec-simplicial} we have
$$
d^0[(d^0)^l,e_j] = [(d^0)^{l+1},d^{l+1}_*e_j].
$$
Note that in this case $\iota$ is induced by the inclusion $d^{l+1}:[l]\to [l+1]$. By abuse of notation we will identify $d^{l+1}_*e_j$ with the generator $e_j$ in $F^{l+1}$. After this identification we see that all the other generators are determined once we fix $f_{k-l}[(d^0)^l,e_l]$ for $1\leq l\leq k$, where $e_l$ is the top generator of $F^l$. We package this information as a diagram
\begin{equation}\label{diag:SGHom}
\begin{tikzcd}
F^1  \arrow{r}{d_*^2} \arrow{d}{f_{k-1}} & F^2  \arrow{r}{d_*^3} \arrow{d}{f_{k-2}} & \cdots  \arrow{r}{d^k_*} & F^k  \arrow{d}{f_{0}} \\
K_{k-1}  \arrow{r}{d_0}  & K_{k-2} \arrow{r}{d_0}   & \cdots \arrow{r}{d_0}  & K_0
\end{tikzcd}
\end{equation}
where $d^l_*$ is the map induced by $d^l:[l-1]\to [l]$.

Evaluating the homomorphisms $f_{k-l}$ at the generator $e_l$ of each free group gives a $k$-tuples
$
(x_{k-1},x_{k-2},\cdots, x_0)
$
where $x_{k-l}$ belongs to $K_{k-l}$. Then   the images of the generators of $F^l$ are given by
\begin{equation}\label{eq:imagesofgen}
f_{k-l}(e_j)=(d_0)^{l-j}x_{k-j}
\end{equation}
where $1\leq j\leq l$.

\Pro{\label{pro:SGHomSet}
There is a bijection of sets
$$
\catsGrp(\pi_1\Dec\Delta[k],K) \longrightarrow K_{k-1}\times K_{k-2}\times \cdots \times K_0.
$$
defined by sending
$$
f\mapsto (x_{k-1},x_{k-2},\cdots, x_0)
$$
where $x_{k-l}=f_{k-l}[(d^0)^l,e_l]$.
}

\Rem{{\rm
In particular, for $K=G\Delta[k]$ we can find the $k$-tuple corresponding to  the map $(\epsilon^\dt)^{-1}$ defined in Remark \ref{rem:inverse-iso}. Consider the restriction $\Delta[l]_1 \to \Delta[k]_{n+1}$ of the map in \ref{map:epsilon-inv} to the factor $\varphi=(d^0)^l:[k-l]\to [k]$ in the coproduct.
The generator $e_l$ in $F^l$ is represented by the map $(d^0)^{l-1}:[1]\to [l]$. Under \ref{map:epsilon-inv} the pair $((d^0)^l,(d^0)^{l-1})$ is mapped to $(d^0)^{l-1}:[k-l+1]\to [k]$. As a result we obtain
$$
f_{k-l}(e_l) = [(d^0)^{l-1}] \in G\Delta[k]_{k-l}
$$
Thus,  under the bijection in Proposition \ref{pro:SGHomSet} we have
$$
(\epsilon^\dt)^{-1} \mapsto   ([\idy],[d^0],\cdots,[(d^0)^{k-1}]).
$$
}}

\section{Variants of the $\Wbar$-construction}\label{sec:varWbar}

{In this section w}e introduce the simplicial set $\Wbar(\tau,K)$ that depends on an endofunctor $\tau$ on the category of groups.  Under some niceness conditions on $\tau$ we describe the set of $n$-simplices. This object comes with a map $\Wbar(\tau,K)\to \Wbar K$, which we show splits up to homotopy after looping once.

When $\tau$ is the identity functor we recover $\Wbar K$. Other examples come mainly from the descending central series, and in particular, the abelianization functor, which gives a simplicial version of $B(\ZZ,G)$.

\subsection{$\Wbar$-construction} The loop functor $G$ has a well-known right adjoint $\Wbar:\catsGrp \rightarrow \catsSet$. For a simplicial group $K$ the set of $k$-simplices of $\Wbar K$ consists of simplicial group homomorphisms $G\Delta[k]\rightarrow K$, and the simplicial structure is determined by the cosimplicial structure of $G\Delta^\dt$. More explicitly,     $\Wbar(K)_k$  can be identified  with the product $K_{k-1}\times K_{k-2}\times \cdots\times K_0$. Under this identification the simplicial structure is described as follows:   the face maps are given by
{\small
$$
d_i(x_{k-1},\cdots,x_0) = \left\lbrace
\begin{array}{ll}
(x_{k-2},x_{k-3},\cdots,x_0) & \revC{i=0}\\
(d_{i-1}x_{k-1},d_{i-2}x_{k-2},\cdots,d_0x_{k-i}x_{k-i-1},x_{k-i-2},\cdots,x_0 ) & 0<i<k \\
(d_{k-1}x_{k-1},d_{k-2}x_{k-2},\cdots , d_1x_1) & i=k
\end{array}\right.
$$
}
and the degeneracy maps are given by
{\small
$$
s_i(x_{k-1},\cdots,x_0) = \left\lbrace
\begin{array}{ll}
 (1,x_{k-1},\cdots,x_0) & i=0 \\
(s_{i-1}x_{k-1},s_{i-2}x_{k-2},\cdots,s_0x_{k-i},1,x_{k-i-1},x_{k-i-2},\cdots,x_0 ) & 0<i\leq k.
\end{array}\right.
$$
}

As a consequence of the natural isomorphism $G\Delta^\dt \rightarrow \pi_1\Dec\Delta^\dt$    proved in Proposition \ref{pro:Loop-pi1Dec-iso} the functor $\pi_1\Dec$ has also a right adjoint   isomorphic to the functor $\Wbar$.
The right adjoint of $\pi_1\Dec$ is defined by
$$
K \mapsto \catsGrp( \pi_1\Dec\Delta^\dt, K)
$$
where the simplicial group homomorphisms are taken level-wise. Let us describe the cosimplicial structure of $\pi_1\Dec\Delta^\dt$. Given an ordinal map $\alpha:[l]\rightarrow [k]$ the induced map on d\'{e}calage $\Dec \Delta[l] \rightarrow \Dec\Delta[k]$ sends a pair $(\varphi,\theta)$ to composition by $\alpha$, namely to the pair $(\alpha\varphi,\alpha\theta)$. The map between the fundamental groups is determined when $\theta$ is a $1$-simplex of $\Delta[\varphi(0)]$, that is, we have $\alpha^*[\varphi,e_j]=[\alpha\varphi,\alpha_*(e_j)]$ where $e_j$ belongs to the free group $F^{\varphi(0)}$. Given this general description for arbitrary ordinal maps  let us figure out the effect of the coface maps $d^i:[k-1]\rightarrow [k]$ first. It suffices to consider the generators $[(d^0)^m,e_{m}]$, where $1\leq m\leq k-1$, since the rest is determined by the simplicial structure. Using the cosimplicial identity $d^id^0=d^0d^{i-1}$ for $i>0$ we obtain
\begin{equation}\label{eq:DecDelta-face}
d^i[(d^0)^m,e_{m}] =\left\lbrace
\begin{array}{ll}
{ [(d^0)^m d^{i-m},e_{m}]} & 1\leq m < i \leq k \\
{ [(d^0)^{m+1},e_m]}\,{ [(d^0)^{m+1},e_{m+1}]} & m=i \\
{[(d^0)^{m+1},e_{m+1}]} & i< m\leq k-1
\end{array}
\right.
\end{equation}
and for the codegeneracy maps $s^i:[k+1]\rightarrow [k]$, using $s^id^0=d^0s^{i-1}$ for $i>0$ and $s^0d^0=1$, we have
\begin{equation}\label{eq:DecDelta-degeneracy}
s^i[(d^0)^m,e_{m}] =\left\lbrace
\begin{array}{ll}
{ [(d^0)^{m}s^{i-m},e_{m}]} & 1\leq m \leq i \leq k  \\
{ [(d^0)^{m-1},1]} & m=i+1\\
{ [(d^0)^{m-1},e_{m-1}]} & i+1 < m\leq k+1.
\end{array}
\right.
\end{equation}

\Pro{\label{pro:rightadjoint-Wbar} There is a natural identification of simplicial sets
$$
\Wbar(K) = \catsGrp( \pi_1\Dec\,\Delta^\dt, K) .
$$
}
\Proof{
Using the cosimplicial structure of $\pi_1\Dec\Delta^\dt$  described in \ref{eq:DecDelta-face} and \revC{\ref{eq:DecDelta-degeneracy}} we check that the right adjoint is  equal to the $\Wbar$-construction.
In Proposition \ref{pro:SGHomSet} we have seen that simplicial group homomorphisms $f:\pi_1\Dec\Delta[k]\rightarrow K$ are in one-to-one correspondence with $k$-tuples $
(x_{k-1},x_{k-2},\cdots, x_0)
$
where $x_i\in K_{i}$. The correspondence is obtained by letting $x_{k-m}$ denote the image of $[(d^0)^m,e_m]$ under $f$. Using the cosimplicial structure of $\pi_1\Dec\Delta^\dt$ we first compute the coface maps $d^i:[k-1]\rightarrow [k]$:
$$
f(d^i{[(d^0)^m,e_m]}) = \left\lbrace \begin{array}{ll}
d_{i-m} x_{k-m} & 1\leq m < i \leq k \\
d_0 (x_{k-i}) x_{k-i-1} & m=i \\
x_{k-m-1} & i<m \leq k-1
\end{array}\right.
$$
whereas the codegeneracy maps give
$$
f(\revC{s^i}{[(d^0)^m,e_m]}) = \left\lbrace \begin{array}{ll}
s_{i-m} x_{k-m} & 1\leq m\leq i \leq k\\
1 & m=i+1 \\
x_{k-m+1} &   i+1< m \leq k-1.
\end{array}\right.
$$
This shows that the simplicial structure on the $k$-tuples $
(x_{k-1},x_{k-2},\cdots, x_0)
$
is exactly the one of the $\Wbar$-construction.
}

\subsection{Endofunctors}
Recall that we used the cosimplicial group $F^\dt$, where $F^k = \pi_1\Delta[k]$, in the definition of the nerve functor, namely $N=\catGrp(F^\dt,-)$. We will introduce a variant of this construction with respect to an endofunctor $\catGrp \to \catGrp$.

Left Kan extension \cite[Chapter 1]{riehl2014categorical} of \revC{a cosimplicial group} $\tau^\dt:\catDelta\rightarrow \catGrp$ along the natural inclusion   $\Delta^\dt:\catDelta \rightarrow \catsSet$  gives a functor
$$
\pi_1(\tau,-):\catsSet \rightarrow \catGrp.
$$
By the general theory of left Kan extensions there is  a corresponding right adjoint
$$
N(\tau,-):\catGrp \rightarrow \catsSet
$$
defined by $N(\tau,G)_n = \catGrp(\tau^n,G)$.
Given an endofunctor $\tau$ we consider the cosimplicial group $\tau^\dt$ defined by
$$
\tau^k =  \tau F^k.
$$
The group $\pi_1(\tau,\Delta[k])$ is naturally isomorphic to $\tau F^k=\tau\pi_1\Delta[k]$.
Note that we recover the adjunction $\pi_1\adjoint N$ when $\tau$ is the identity functor.

\Def{\rm{\label{def:quotient-type}
We say that \emph{$\tau$ is of quotient type}, if there exists a natural transformation $\eta:\idy\to\tau$ such that the map of groups $\eta_{F^n}:F^n\to\tau^n$ is surjective for all $n\ge0$. In this case we also say that \emph{$\eta$ is surjective on finitely generated free groups}.
}}

Let us give a list of endofunctors that are of interest to us, see for example \cite{O14}.
\begin{itemize}
\item Descending central series endofunctor $\Gamma_q$:   The descending central series of a group $H$ is defined by
$$\Gamma^1(H)=H,\;\;\;\;\Gamma^q(H)=[\Gamma^{q-1}(H),H].$$
We will denote the $q$-th stage  $H/\Gamma^qH$ of the descending central series by $\Gamma_qH$.

\item $\Gamma_2$ will have a special importance. We introduce the notation
$$
\ZZ^\dt = \Gamma_2 F^\dt.
$$
This is a cosimplicial group sending $[n]$ to the free abelian group $\ZZ^n$ of rank $n$. Alternatively we can think of this as the functor $[n]\mapsto H_1(\Delta[n]/\Delta[n]_0,\ZZ)$, where $\Delta[n]/\Delta[n]_0$ is the simplicial set obtained by identifying all the vertices of $\Delta[n]$ (see \cite[\S 2]{OW19}).

\item  Mod-$p$ version $\Gamma_{p,q}$: For a group $H$ mod-$p$ descending central series  is defined   by
$$\Gamma^1_p(H)=H,\;\;\;\;\Gamma^q_p(H)=[\Gamma^{q-1}_p(H),H](\Gamma^{q-1}_p(H))^p.$$
The $q$-th stage  $H/\Gamma^q_pH$ is denoted by $\Gamma_{p,q}H$.

\item $\Gamma_{p,2}$ is used to define
$$
(\ZZ/p)^\dt = \Gamma_{p,2} F^\dt.
$$

\item Let $\Gamma_{p^k,2}$ denote the composition of $\Gamma_{2}$ with the mod-$p^k$ reduction functor that sends an abelian group to the largest $p^k$-torsion quotient. We write
$$
(\ZZ/p^k)^\dt = \Gamma_{p^k,2} F^\dt.
$$

\item Let $H\mapsto H^\wedge_p$ denote the $p$-adic completion functor. Let $\hat\Gamma_{p,q}$ denote the functor $H\mapsto \Gamma_q( H^\wedge_p)$.

\item The $p$-adic cosimplicial group is defined by
$$
(\ZZ_p)^\dt = \revC{\hat \Gamma_{p,2}} F^\dt.
$$

\end{itemize}

\Rem{{\rm
The endofunctors $\Gamma_q$ and $\Gamma_{p^k,q}$ are of quotient type. The completed version $\hat\Gamma_{p,q}$ fails to satisfy the surjectivity assumption.
}}

\Ex{{\rm
Let $X(n)$ denote the quotient of $\Delta[n]$ by the set of vertices $\Delta[n]_0$. Then one can check that $\pi_1(\ZZ,-)$ of the inclusion
$$
\vee^n X(1) \rightarrow X(n)
$$
is the abelianization map $F^n\rightarrow \ZZ^n$.  Higher simplices \revC{have} the effect of abelianizing the ordinary fundamental group.
}}

\subsection{Variants of \revtwo{the} $\Wbar$-construction}
We can generalize the adjunction between the $\Wbar$-construction and Kan's loop group functor $G$ with respect to a given endofunctor  on the category of groups.

\revC{Let $\tau^\dt:\Delta \to \catGrp$ be a cosimplicial group.} We define the functor
$$
G(\tau,-):\catsSet \rightarrow \catsGrp\;\;\;\; X\mapsto  \pi_1(\tau, \Dec X)
$$
which, up to natural isomorphism, is the left Kan extension of $\pi_1(\tau, \Dec \Delta^\dt)$ along the inclusion   of the simplex category into the category of simplicial sets.
On a $k$-simplex this functor is given by
$$
G(\tau,\Delta[k])_n = \coprod_{\varphi:[n]\rightarrow [k]} \tau F^{\varphi(0)}
$$
and the simplicial structure is induced from the one of $G \Delta[k]$.

There exists a right adjoint of the functor $G(\tau,-)$ which we denote by
$$
\Wbar(\tau,-):\catsGrp \rightarrow \catsSet.
$$
\revC{We restrict our attention to cosimplicial groups $\tau^\dt$ of the form $\tau F^\dt$ for some endofunctor $\tau: \catGrp \to \catGrp$.} Observe that these constructions recover the usual adjunction $G\adjoint\Wbar$ when $\tau$ is the identity functor.

\Pro{\label{pro:simplicesWbar}
Let $\tau:\catGrp\to\catGrp$ be an endofunctor, and $\eta:\idy\to\tau$ a natural transformation that is surjective on finitely generated free groups (Definition \ref{def:quotient-type}). Then the set of $k$-simplices of $\Wbar(\tau,K)$ is given by tuples
$$
(x_{k-1},x_{k-2},\cdots,x_0) \in K_{k-1}\times K_{k-2}\times \cdots\times K_0
$$
that satisfy the following property: For $1\leq l\leq k$ {\color{black} the homomorphism $f_{k-l}:F^l \to K_{k-l}$ defined by $f_{k-l}(e_j)= (d_0)^{l-j}x_{k-j}$ factors through $\eta_{F^l}:F^l\to \tau F^l$.}
}
\begin{proof}
The set $\Wbar(K)_k$, equivalently the set of simplicial group homomorphisms $\pi_1\Dec\Delta[k]\to K$, is described in Proposition \ref{pro:SGHomSet}.
Since $\eta_{F^n}$ is surjective by assumption the induced map $\eta_*:\catGrp(\tau F^n,H)\to \catGrp(F^n,H)$ is injective for any group $H$.
We see that there is an inclusion of simplicial sets $\Wbar(\tau,K)\subset \Wbar K$ and {\color{black} as a consequence of \ref{diag:SGHom} } the elements of $\Wbar(\tau,K)_k$ \revC{correspond} to diagrams
$$
\begin{tikzcd}
\tau F^1  \arrow{r}{d_*^2} \arrow{d}{f_{k-1}} & \tau F^2  \arrow{r}{d_*^3} \arrow{d}{f_{k-2}} & \cdots  \arrow{r}{d^k_*} & \tau F^k  \arrow{d}{f_{0}} \\
K_{k-1}  \arrow{r}{d_0}  & K_{k-2} \arrow{r}{d_0}   & \cdots \arrow{r}{d_0}  & K_0
\end{tikzcd}
$$

\end{proof}

\Ex{\label{ex:simplices}{\rm
It is instructive to look at $\Wbar(\ZZ,K)$. The set of $k$-simplices consists of
$$
(x_{k-1},x_{k-2},\cdots,x_0)
$$
such that the elements $(d_0)^{l-1} x_{k-1},(d_0)^{l-2}x_{k-2},\cdots,\revC{d_0x_{k-l+1}}, x_{k-l}$
pairwise commute for all $1\leq l\leq k$.

This easily generalizes to $\Wbar(\Gamma_q,K)$.  When $K$ is discrete, i.e. $K_n=G$ for some discrete group $G$ and the simplicial maps are all identity,  the geometric realization of $\Wbar(\Gamma_q,G)$ is precisely the space $B(q,G)$ introduced in \cite{ACT12}.
}}

{
\Rem{
{\rm
In general, $\Wbar(\tau,K)$ is not a Kan complex; that is, it is not fibrant in the Quillen model structure on the category of simplicial sets.
This can be seen by considering $\Wbar(\ZZ,G)$, where $G$ is a discrete non-abelian group  viewed as a simplicial group with identity simplicial structure maps. From Example \ref{ex:simplices}, we see that $n$-simplices of the bar construction can be identified with tuples $(g_1,\cdots,g_n)$ of pairwise commuting elements in $G$. Then, the map $\Lambda_1[2] \to \Wbar(\ZZ,G)$ from the horn that sends the $d_0$ and $d_2$ faces to $g_1$ and $g_2$, where $g_1$ and $g_2$ do not commute, does not extend to $\Delta[2]$. Hence, $\Wbar(\ZZ,G)$ fails to be a Kan complex. 
}}
}

\subsection{Descending central series filtration}
We introduce a simplicial version of the filtration introduced in \cite{ACT12} for the classifying space of a topological group.
This filtration is obtained from the sequence of endofunctors
$$
\revC{\idy \eqqcolon{}} \Gamma_\infty \rightarrow \cdots \rightarrow  \Gamma_q \rightarrow \Gamma_{q-1} \rightarrow \cdots \Gamma_2
$$
associated to the stages of the descending central series. For each endofunctor we have a cosimplicial group
$$
\Gamma^\dt_q = \Gamma_q F^\dt.
$$
We write
$$\Wbar(q,-)=\Wbar(\Gamma_q,-),\;\;\;\;G(q,-)=G(\Gamma_q,-).$$
The resulting sequence
\begin{equation}\label{eq:cofilt-G}
G(X) \rightarrow \cdots \rightarrow  G(q,X) \rightarrow G({q-1},X) \rightarrow \cdots\to G(2,X)=G(\ZZ,X)
\end{equation}
consists of surjective  simplicial group homomorphisms since $\Gamma_q H\to \Gamma_{q-1}H$   is surjective for any group $H$.

On the other hand, we have a sequence of inclusions of simplicial sets
\begin{equation}\label{eq:filt-W}
 \Wbar(\ZZ,K)= \Wbar(2,K) \rightarrow \cdots \rightarrow  \Wbar(q-1,K) \rightarrow \Wbar({q},K) \rightarrow \cdots \to\Wbar(K)
\end{equation}
that yields  a filtration of the $\Wbar$-construction.

Applying \revC{the} $\Wbar$ functor to the sequence of simplicial groups in (\ref{eq:cofilt-G}) gives a cofiltration of $X\simeq \Wbar G(X)$. We can use (\ref{eq:filt-W}) to further filter each term by subspaces of the form $\Wbar(q',G(q,X))$. \revC{The construction} of such filtrations is motivated by \cite[Problem 3]{cohen2016survey}.

\subsection{Kan suspension}   Let $X$ be a pointed simplicial set. \revC{The Kan suspension of $X$} is the simplicial set  $\Sigma X$ whose set of $n$-simplices is given by the wedge $X_{n-1}\vee X_{n-2}\vee \cdots \vee X_{0}$ (\cite[page 189]{GJ99}). An ordinal map $\theta: [m]\rightarrow [n]$ induces $\theta^*:(\Sigma X)_n\rightarrow (\Sigma X)_m$ which maps a wedge summand $X_{n-i}$ to the base point if $\theta^{-1}(i)$ is empty, otherwise it is determined by   $\theta_i^*: X_{n-\theta(i)} \rightarrow X_{m-i}$ where $\theta_i$ is defined by the diagram
\begin{equation*}
\begin{tikzcd}
{[m-i]} \arrow{r}{(d^0)^{i}} \arrow{d}{\theta_i} & {[m]} \arrow{d}{\theta} \\
{[n-\theta(i)]} \arrow{r}{(d^0)^{\theta(i)}} & {[n]}
\end{tikzcd}
\end{equation*}
Let $K$ be a simplicial group pointed by the identity element of each $K_n$.
There is a canonical map of simplicial sets
$$
\kappa: \Sigma K \rightarrow \Wbar K
$$
induced by the inclusion $K_{n-1} \vee \cdots \vee K_{0} \rightarrow K_{n-1} \times \cdots \times K_{0} $ at the $n$-th level.

\Pro{\label{pro:splitting} Assume that $\tau$ is of quotient type and  satisfies the property that $\eta_{F_1}:F_1\to \tau F_1$ is an isomorphism. Then the natural map
$$
G\Wbar(\tau,K) \rightarrow G(\Wbar K)
$$
splits (naturally) up to homotopy.
}

This is a simplicial analogue of \cite[Theorem 6.3]{ACT12} that applies to the topological $B_\com$ construction.

\begin{proof}
First observe that under the assumption on $\tau$ we have that $\eta_C: C\to \tau C$ is an isomorphism for any cyclic group $C$. Then by the description of the simplices of $\Wbar(\tau,K)$ given in Proposition \ref{pro:simplicesWbar} the map $\kappa$ factors through
$$
\kappa_\tau: \Sigma K \to \Wbar(\tau,K).
$$

The splitting is given by the following diagram
$$
\begin{tikzcd}
G\Wbar(\tau,K) \arrow{d} &  G (\Sigma K) \arrow[l,"G(\kappa_\tau)"'] & \\
G(\Wbar K) \arrow{r}{\sim} & K \arrow{r}& FK \arrow{ul}{\cong}
\end{tikzcd}
$$
Let us explain the maps. The weak equivalence is the counit of the adjunction between the loop group and bar construction.  In degree $n$ the simplicial group $FK$, known as \revC{Milnor's construction} \cite[page 285]{GJ99}, is the free group generated on  $K_n-\set{\ast}$.
The map $K\rightarrow FK$ sends an $n$-simplex to the corresponding generator of the free group.
The set of $n$-simplices of $G (\Sigma K)$ is given by $F(\Sigma K)_{n+1} /F(s_0(\Sigma K)_{n} )$ which can be identified with the $n$-simplices of $FK$ since $s_0(\Sigma K)_{n}$ maps onto the wedge summands of $(\Sigma K)_{n+1}$ other than $X_n$. This isomorphism is compatible with the simplicial structure. The last two maps are already described above. Starting from $K$ the composition of the five maps gives the identity. The splitting is natural with respect to $K$ since each construction is functorial in $K$.
\end{proof}

\Cor{\label{cor:split-surj}Under the assumption of Proposition \ref{pro:splitting} the natural map $\Wbar(\tau,K)\to \Wbar K$ induces a split surjection on homotopy groups.}

\section{Comparison to the topological version}  \label{sec:comptopcat}

{Throughout this section }$G$ denotes a topological group.
{In Section \ref{sec:varWbar} w}e introduced $\Wbar(\tau,K)$ and claimed that it is a simplicial analogue of $B(\tau,G)$. In this section we turn this claim into a theorem. We prove two homotopy equivalences
$$
B(\tau,|K|)\simeq |\Wbar(\tau,K)|,\;\;\;\; B(\tau,G) \simeq |\Wbar(\tau,SG)|
$$
where in the latter one $G$ is required to be a compact Lie group.

\subsection{Classifying spaces} The nerve construction  of a discrete group can be extended to the category of topological groups \cite{Seg68}. Given a topological group $G$ the set of $n$-simplices of $NG$ is the $n$-fold direct product $G^{\times n}=G\times \cdots \times G$, hence a topological space when $G$ has \revtwo{a} topology. The simplicial object $NG_\dt$ is a simplicial space, and its geometric realization is denoted by
$$
BG = |NG_\dt|
$$
which is called  the classifying space of $G$. We can think of the $n$-fold product as the space of group homomorphisms
$$
NG_n = \Hom(F^n,G).
$$
We use $\Hom(-,-)$, rather than $\catGrp(-,-)$, to emphasize the topology.

Given a simplicial group $K$, the geometric realization $|K|$ is a topological group, and we can consider $B|K|$. We would like to compare this space to the geometric realization of $\Wbar (K)$.

\Pro{\label{B_W}
There is a natural homotopy equivalence
$$
B|K| \rightarrow |\Wbar(K)|.
$$
}
\Proof{Taking the geometric realization of 
{the Cegarra--Remedios map $CR:dX\to TX$, which is a weak equivalence \cite{CR05}*{Theorem 1.1},}
we obtain a homotopy equivalence 
$$\revC{|CR|:} |dNK|\rightarrow |\Wbar K|$$ 
We can realize the bisimplicial set $NK$ in  different ways. All of these are homeomorphic to each other \cite{Qui73}. There is a sequence of  natural homeomorphisms
$$
\begin{aligned}
|dNK| &\cong | [p]\mapsto | [q]\mapsto  N(K_q)_p | | \\
&= |[p]\mapsto |K^p| |\\
& \cong  |[p]\mapsto |K|^p | \\
&= B|K|
\end{aligned}
$$
where we also used the fact that geometric realization preserves finite products \cite{Mil57}. \revC{Thus we obtain the desired map $B|K|\to |\Wbar(K)|$ as the composite $|CR|Q^{-1}$ where $Q:|dNK|\to B|K|$ is the homeomorphism described above.}
}

\subsection{$\tau$-version}\label{ss:B(tau,G)}
Given an endofunctor $\tau$  on the category of groups we can define
$$
B(\tau,G)=| [n]\mapsto \Hom(\tau^n, G ) |
$$
where $\tau^\dt = \tau F^\dt$ as usual. Note that this definition works for an arbitrary cosimplicial group not necessarily coming from an endofunctor. For the rest of the section we assume that $\tau$ is of quotient type.
In this case  the homomorphism space $\Hom(\tau^n,G)$ is topologized as a subspace of $G^{\times n}$. The natural transformation $\eta$ induces an inclusion $B(\tau,G)\subset BG$. We denote by $E(\tau,G)\to B(\tau,G)$ the pull-back of the universal bundle $EG\to BG$.

We would like to prove an analogue of Proposition \ref{B_W}. To prepare we need a preliminary result. Observe that the set of group homomorphisms $\Hom(\tau F^n,K_m)$ can be assembled into a simplicial set by using the simplicial structure of $K$.

\Lem{\label{lem:real-T}
There is a natural homeomorphism
$$
|\Hom(\tau F^n,K_\dt)| \rightarrow \Hom(\tau F^n , |K|).
$$
}
\begin{proof}    \revC{
We will use some of the basic properties of the geometric realization proved in \cite[Chapter III]{May67}.
A point in the geometric realization $|X|$ of a simplicial set $X$ is given by an equivalence class $[u_m,x_m]$ where $x_m$ is a non-degenerate $m$-simplex of $X$ and $u_m$ is a point in the interior of $\Delta^m$. The natural map $\phi:|X\times X|\to |X|\times |X|$ induced by applying $|\,|$ to the projection maps onto each factor of $X\times X$ is a homeomorphism. Let $\phi^{-1}:|X|\times |X|\to |X\times X|$ denote the inverse of this map. This map can be described explicitly (as in the proof of \cite[Theorem 14.3]{May67}), but we just need to know that a point $([u_{m_1},x_{m_1}],[u_{m_2},x_{m_2}])$ in the product is sent to the point $[u_m,(x^{(1)}_m,x^{(2)}_m)]$ where $u_{m_i},\;i=1,2,$ can be obtained from $u_m$ by applying a sequence of codegeneracy maps and $x^{(i)}_{m},\;i=1,2,$ are given by applying the corresponding sequence of degeneracy maps to $x_{m_i}$. Note that the multiplication map of $|K|$ is given by $|K|\times |K|\xrightarrow{\phi^{-1}} |K\times K| \xrightarrow{|\mu|} |K|$.
Let $\phi_n$ denote the map $|K^{\times n}|\to |K|^{\times n}$ obtained by applying $\phi$ multiple times: $|K^{\times n}| \to |K|\times |K^{\times {n-1}}| \to \cdots \to |K|^{\times n} $. Let us define the following maps
$$
\iota_1: |\Hom(\tau F^n,K_\dt)| \to |K^{\times n}|
$$
defined by $\iota_1([u_m,\tau F^n \xrightarrow{f} K_m])=[u_m,(f(e_1),\cdots,f(e_n))]$, and
$$
\iota_2: \Hom(\tau F^n,|K|) \to |K|^{\times n}
$$
defined by $\iota_2(\tau F^n\xrightarrow{g} |K| )=(g(e_1),\cdots,g(e_n))$. Both of these maps are embeddings. This is clear for $\iota_1$. For $\iota_2$ it follows from the fact that the geometric realization functor commutes with finite limits, in particular equalizers.

We will write $R_n$ for the kernel of $\eta_{F^n}: F^n\to\tau  F^n$. Each element of $R_n$ determines a relation $r(e_1,\cdots,e_n)$. A group homomorphism $F^n\to H$ factors through $\tau F^n\to H$ if and only if $r(f(e_1),\cdots,f(e_n))=1$ in $H$ for all $r\in R_n$. Now, consider the composite $\phi_n \iota_1$ which maps $[u_m,\tau F^n \xrightarrow{f} K_m]$ to the tuple $([u_m,f(e_1) ],\cdots,[u_m,f(e_n)])$. This tuple regarded as a group homomorphism $F^n\to |K|^{\times n}$ factors through $\tau F^n$. This follows from the fact that the elements $x_i=[u_m,f(e_i)]$ satisfy the relations $r(x_1,\cdots,x_n)=1$. Note that here we are using  the fact that $\phi^{-1}([u_m,k_m],[u_m,k'_m])$ is simply $[u_m,(k_m,k'_m)]$.
Therefore we obtain a commutative diagram
}
$$
\begin{tikzcd}
{|\Hom(\tau F^n,K_\dt)|} \arrow[d,hook,"\iota_1"] \arrow{r} & {\Hom(\tau F^n , |K|)} \arrow[d,hook,"\iota_2"]  \\
{|K^{\times n}|} \arrow[r,"\phi_n"]  & {|K|^{\times n}}
\end{tikzcd}
$$
\revC{We will show that the top map is a homeomorphism. Injectivity of the map is clear from the diagram.  To see surjectivity let $ f:F^n\rightarrow |K|$ be a homomorphism that factors through $\tau F^n$. Each element $f(e_i)$ is given by an  equivalence class $[u_{l_i},k_{l_i}]$   where $k_{l_i}$ is an $l_i$-simplex of $K$ and $u_{l_i}$ is a point in the interior of $\Delta^{l_i}$
Let $[u_m, k_{m}^{(1)},\cdots, k_{m}^{(n)} ]$ denote the element which maps to $([u_{l_1},k_{l_1}],\cdots, [u_{l_n},k_{l_n}]) $ under the homeomorphism $\phi_n$.
Here $k_{m}^{(i)}$ belongs to $K_m$ and $u_m$ belongs to the interior of $\Delta^m$.
Then for each $r\in R_n$ the relation $r(k_{m}^{(1)},\cdots, k_{m}^{(n)})=1$ holds in $K_m$ since $r(f(e_1),\cdots,f(e_n))=1$ in $|K|$.
Sending $e_i$ to $k_m^{(i)}$ defines a homomorphism $F^n\rightarrow K_m$ which factors through $f':\tau F^n\rightarrow K_m$. The element $[u_m,f']$ in the geometric realization ${|\Hom(\tau F^n,K_\dt)|}$ maps to $f$ under $\phi_n$. This proves the surjectivity. As a result  the homeomorphism $\phi_n$ restricts to a bijection between the subspaces, and consequently gives the desired homeomorphism.  }
\end{proof}

\Thm{\label{thm:B-Wbar}
There is a natural homotopy equivalence
$$
B(\tau,|K|) \rightarrow |\Wbar(\tau,K)|.
$$
}
\Proof{
Recall that we have a weak equivalence $dY\rightarrow TY$ for any bisimplicial set $Y$. We apply this to the bisimplicial set $N(\tau,K)$, and use the identification $\Wbar(\tau,K)\cong TN(\tau,K)$. This gives a homotopy equivalence $$
|dN(\tau,K)| \rightarrow |\Wbar(\tau,K)|
$$
after realization. It remains to identify $|dN(\tau,K)|$ with $B(\tau,|K|)$. Using Lemma \ref{lem:real-T} and the homeomorphism between different ways of realizing a bisimplicial \revC{set} we obtain
$$
\begin{aligned}
|dN(\tau,K)| &\cong |[p]\mapsto | [q]\mapsto  \Hom(\tau F^p,K_q) | | \\
& \cong
 |[p]\mapsto \Hom(\tau F^p , |K|) | \\
 &= B(\tau,|K|).
 \end{aligned}
$$
}

\subsection{Compact Lie groups}
In Theorem \ref{thm:B-Wbar} we started from a simplicial group $K$ and compared the simplicial and topological constructions. Conversely we can do such a comparison for a topological group $G$. However, for such a comparison to work we need to restrict our attention to a nice class of groups such as compact Lie groups. \revC{We introduce the class of $\tau$-good groups in Definition \ref{def:GoodGroup}.}

\Rem{\label{rem:fat-real}{\rm
The main reason for this restriction is that the geometric realization functor does not respect homotopy equivalences in general. A better behaving realization functor is the {\it fat realization} which is obtained by forgetting the degeneracies when gluing the simplices {\cite[Appendix A]{segal1974categories}}. For a simplicial space $X$ its fat realization is denoted by $\fat{X}$. If the simplicial space is {\it good}, i.e. all degeneracy maps $s_i:X_{n-1}\to X_n$ are closed cofibrations in the sense of Hurewicz, then the natural map $\fat{X}\to |X|$ is a homotopy equivalence. Any simplicial set is a good simplicial space, or more generally the simplicial space $[n]\mapsto |S(X_n)|$ is good where $X$ is an arbitrary simplicial space.

}}

\begin{rem}\label{rem:ComLie-Good}
If $G$ is a compact Lie group then $[n]\mapsto \Hom(\tau^n,G)$ is good. Thus up to homotopy we can replace geometric realization by the fat realization in the construction of $B(\tau,G)$, see \cite{OW19}.  This property will be crucial in our consideration.
\end{rem}

We need a  version of Lemma \ref{lem:real-T} for the singular functor. \revC{
 Let $H$ be a discrete group and $G$ a topological group. Recall that the mapping space $\Map(H,G)$ is the set of maps $H\to G$ equipped with the compact-open topology. We equip the subset $\catGrp(H,G)\subseteq\Map(H,G)$ with the subspace topology. In the case of $H=\tau F^n$ this is consistent with the topology induced from $G^{\times n}$
}

\Lem{\label{lem:sing}
Let $H$ be a discrete group, and $G$ a topological group. Then there is a natural isomorphism of simplicial sets
$$
S(\Hom(H,G)) \to \Hom(H,S(G)_\dt).
$$
}
\begin{proof}
\revC{
It is enough to show
that the pair of mutually inverse natural isomorphisms
\begin{center}

\begin{tikzpicture}[xscale=3,yscale=1.5]
\node (C) at (0,0) {$\catTop(\Delta^n,\Map(H,G))$};
\node (D) at (1,0) {$\cong$};
\node (E) at (2,0) {$\catSet(H,\catTop(\Delta^n,G))$};
\path[->,font=\scriptsize,>=angle 90]
(C) edge [bend left=60] node [above] {$f\mapsto(h\mapsto\ev_h\circ f)$} (E)
(E) edge [bend left=60] node [below] {$(x\mapsto(\,h\mapsto w(h)(x)\,  ))\mapsfrom w$} (C);
\end{tikzpicture}

\end{center}
restricts to a pair of mutually inverse natural isomorphisms
$$
\catTop(\Delta^n,\Hom(H,G))\cong
\catGrp(H,\catTop(\Delta^n,G)).
$$
Take $f\in \catTop(\Delta^n,\Hom(H,G))$, $h,h'\in H$ and $x\in\Delta^n$. Then we have
\begin{align*}
 (\ev_h\circ f)\cdot(\ev_{h'}\circ f)(x)&=(f(x)(h))\cdot(f(x)(h'))\\
 &=f(x)(hh')\\
 &=(\ev_{hh'}\circ f)(x)
\end{align*}
which shows $(h\mapsto\ev_h\circ f)\in\catGrp(H,\catTop(\Delta^n,G))$.

Take $w\in \catGrp(H,\catTop(\Delta^n,G))$. Then we have
\begin{align*}
 w(h)(x)\cdot w(h')(x)&=(w(h)\cdot w(h'))(x)\\
 &=w(hh')(x)
\end{align*}
which shows $(x\mapsto(\,h\mapsto w(h)(x)\,))\in\catTop(\Delta^n,\Hom(H,G))$.
}
\end{proof}

\revC{
\Def{\label{def:GoodGroup}
A topological group $G$ is said to be $\tau$-good if each $B(\tau,G)_n$ is a good simplicial space consisting of CW-complexes (simplicial structure maps are {cellular maps}).
}
}

\Cor{\label{cor:simp-Lie}
\revC{
Let $G$ be a topological group which is $\tau$-good, and let $K$ be a simplicial group.
\begin{enumerate}
\item We have a chain of homotopy equivalences
\begin{equation}\label{eq:G-|SG|}
|\Wbar (\tau,SG)|\simeq B (\tau, |SG|) \simeq B(\tau,G)
\end{equation}
where the first one is the one in Theorem \ref{thm:B-Wbar} and the second one is induced by the counit $\epsilon_G: |S G|\to G$.

\item The unit $\eta_K:K\to S|K|$ induces a weak equivalence
\begin{equation}\label{eq:K-S|K|}
\Wbar(\tau,K)\to \Wbar(\tau,S|K|).
\end{equation}
\end{enumerate}
}
}
\begin{proof}
\revC{
It is enough to show that the map $B(\tau,|SG|)\to B(\tau,G)$ induced by the counit $\epsilon_G:|SG|\to G$ is a homotopy equivalence.} By Remark \ref{rem:fat-real} we can use fat realization \revC{for good simplicial spaces}. Lemma \ref{lem:real-T}, Lemma \ref{lem:sing}, and the  \revC{weak} equivalence \revC{$\epsilon_G:|S G| \to G$}  gives us the diagram
 $$
\begin{tikzcd}
{|S\Hom(\tau F^n,G)|} \arrow[r,"\sim"] \arrow[d,"\cong"] & {\Hom(\tau F^n, G)} \\
{|\Hom(\tau F^n, (SG)_\dt)|} \arrow[r,"\cong"] & {\Hom(\tau F^n,|S G|)} \arrow{u}
\end{tikzcd}
$$
\revC{The top map is a homotopy equivalence since $\Hom(\tau F^n,G)$ is a CW-complex.
}
Thus we obtain a level-wise homotopy equivalence, which after  realization, gives \revC{the} homotopy equivalence \revC{in (\ref{eq:G-|SG|})}.

\revC{
To prove part (2) we will show that $|\Wbar(\tau,K)|\to |\Wbar(\tau,S|K|)|$ is a homotopy equivalence.  By Theorem \ref{thm:B-Wbar} this amounts to showing that $B(\tau,|K|)\to B(\tau,|S|K||)$ induced by $|\eta_K|$ is a homotopy equivalence. First observe that $B(\tau,|K|)$ is a good simplicial space for any simplicial group $K$: Each degeneracy map $s_i:\Hom(\tau F^n, |K|) \to \Hom(\tau, F^{n+1}, |K|)$ is a closed cofibration since by Lemma \ref{lem:real-T} $|s_i|:|\Hom(\tau F^n, K)| \to |\Hom(\tau F^{n+1}, K)|$ is an inclusion of a subcomplex of a CW-complex. Therefore  similar to part (1) we can argue level-wise. Now, consider the map
$
\Hom(\tau F^n,|K|) \to \Hom(\tau F^n,|S|K||)
$
induced by $|\eta_K|$. By Lemma \ref{lem:real-T} and Lemma \ref{lem:sing} it is a direct calculation to check that, up to homeomorphism, this map coincides with the weak equivalence
$$|\eta_{\Hom(\tau F^n,K)}|:|\Hom(\tau F^n,K)| \to |S|\Hom(\tau F^n,K)||.$$ This is a homotopy equivalence since the spaces involved are CW-complexes. After geometric realization we obtain the desired homotopy equivalence.
}
\end{proof}

\bibliography{bib}{}
\bibliographystyle{alpha}

\appendix

\end{document}